\tikzset{%
    symbol/.style={%
        ,draw=none
        ,every to/.append style={%
            edge node={node [sloped, allow upside down, auto=false]{$#1$}}}
    }
}
\newcommand{\vN}{\mathbb{N}}
\newcommand{\vZ}{\mathbb{Z}}
\newtheorem{theorem}{Theorem}
\newtheorem{lemma}[theorem]{Lemma}
\newtheorem{corollary}[theorem]{Corollary}
\theoremstyle{definition}
\newtheorem{definition}[theorem]{Definition}
\theoremstyle{remark}
\newtheorem{remark}[theorem]{Remark}
\newtheorem{example}[theorem]{Example}
\numberwithin{equation}{section}
\begin{document}

\title[Local to global principle for expected values]{Local to global principle for expected values}

\author[G. Micheli]{Giacomo Micheli}
\address{Department of Mathematics\\
University of South Florida\\
Tampa, FL 33620, United States of America
}
\email{gmicheli@usf.edu}

\author[S. Schraven]{Severin Schraven}
\address{Institute of Mathematics\\
University of Zurich\\
Winterthurerstrasse 190\\
8057 Zurich, Switzerland
}
\email{severin.schraven@math.uzh.ch}

\author[V. Weger]{Violetta Weger}
\address{Institute of Mathematics\\
University of Zurich\\
Winterthurerstrasse 190\\
8057 Zurich, Switzerland
}
\email{violetta.weger@math.uzh.ch}

\subjclass[2010]{}

\keywords{ Densities, Mean.}

\maketitle

\begin{abstract}
This paper constructs a new local to global principle for expected values over free $\vZ$-modules of finite rank. In our strategy we use the same philosophy as Ekedhal's Sieve for densities, later extended and improved by Poonen and Stoll in their local to global principle for densities.
We show that under some additional hypothesis on the system of $p$-adic subsets used in the principle, one can use $p$-adic measures also when one has to compute expected values (and not only densities).
Moreover, we show that our additional hypotheses are sharp, in the sense that explicit counterexamples exist when any of them is missing. In particular, a system of $p$-adic subsets that works in the Poonen and Stoll principle is not guaranteed to work when one is interested in expected values instead of densities.
Finally, we provide both new applications of the method,  and immediate proofs for known results. 
\end{abstract}

\section{Introduction}
\label{sec:introduction}

Let $\vZ$ be the set of integers and $d$ be a positive integer.
 The problem of computing the ``probability'', that a randomly chosen element in $\vZ^d$ has a certain property, has a long history dating back to Ces\'{a}ro  \cite{ce3, ce1}.
  
Since no uniform probability distribution exists over $\mathbb Z$, one introduces the notion of density.
The density of a set $T \subset \mathbb{Z}^d$ is defined to be 
\begin{equation*}
\rho(T) = \lim_{H \rightarrow \infty} \frac{|T \cap [-H,H[^d|}{(2H)^d},
\end{equation*}
if the limit exists. 
Density results over $\vZ^d$ have received a great deal of interest recently \cite{dotti2016eisenstein, ferraguti2018set, ferraguti2016mertens, guo2013probability, bib:shparlinskiEisen, bib:heyman2014shifted, lieb2018uniform, maze2011natural, mertens1874ueber,micheli2017local,   micheli162,  micheli2016density, densitiesunimodular, bib:nymann1972probability,  bib:BS, wang2017smith}.

In \cite[Lemma 20]{poonenAnn} Poonen and Stoll show  that the computation  of  densities of many sets $S\subseteq \vZ^d$ defined by \emph{local} conditions (in the $p$-adic sense) can be reduced to measuring the corresponding subsets of the $p$-adic integers.
This technique is an extension of Ekedahl's Sieve \cite{torsten1991infinite}. 
An even more general result is \cite[Proposition 3.2]{bright2016failures}.
The general philosophy is that, under some reasonable assumptions, one should be able to treat the $p$-adic measures of infinitely many  sets $U_p\subseteq \vZ_p^d$ independently when one looks at the density of the corresponding set $\bigcap_{p\in \mathcal P} U_p^C\cap \vZ^d$  over the integers.

In \cite{primenumbereisenstein} the authors computed the expected number of primes, for which an Eisenstein-polynomial satisfies the criterion of Eisenstein. 
In this paper we prove that this is a special case of a much more general principle that allows to compute expected values from $p$-adic measures  of nicely chosen systems of $p$-adic sets. In fact, in this article we focus on extending the method by Poonen and Stoll to compute expected values of the ``random variable'' that counts how many times an element is expected to be in one of the $U_p$'s. 
First, we show that the hypothesis of Poonen and Stoll is not sufficient to guarantee a local to global principle for expected values (see Example \ref{st_ex}).
This led us to add two additional hypotheses on the system $(U_p)_{p \in \mathcal{P}}$, that appears in the Poonen and Stoll principle, in order to prove Theorem \ref{addendum}, the main theorem of this article.
The additional hypotheses we require are sharp and necessary, as examples \ref{ex:nocond1} and \ref{ex:nocond2} show.

With this extension to the local to global principle one can for example compute for 
non-coprime $m$-tuples of integers how many prime factors they have in common on average, or for rectangular non-unimodular matrices how many primes divide all the basic minors 
on average, and of course also the result of \cite{primenumbereisenstein} follows directly. 

The paper is organized as follows: in Section \ref{sec2} we will recall the local to global principle by Poonen and Stoll. 
 In Section \ref{sec3} we will introduce the definition of expected value 
 of a system of $(U_\nu)_{\nu \in M_\mathbb{Q}}$ and then state and prove Theorem \ref{addendum}, the main theorem of this paper.
 
  In Section \ref{sec4} we give some applications of our main theorem that allow very fast computations of expected values over $\vZ^d$ 
   (see for example \cite{primenumbereisenstein} compared with Corollary \ref{cor:eisenstein}).

\section{Preliminaries}\label{sec2}

\begin{definition}
Let $d  $ be a positive integer. The \emph{density} of a set $T \subset \mathbb{Z}^d$ is defined to be 
\begin{equation*}
\rho(T) = \lim_{H \rightarrow \infty} \frac{|T \cap [-H,H[^d|}{(2H)^d},
\end{equation*}
if the limit exists. Then one defines the upper density $\bar{\rho}$ and the lower density $\underline{\rho}$ equivalently with the $\limsup$ and the $\liminf$ respectively. 
\end{definition}

For convenience, let us restate here the lemma of Poonen and Stoll in  \cite[Lemma 20]{poonenAnn}. 
If $S$ is a set, then we denote by $2^S$ its powerset and  by $S^C$ its complement.  Let $\mathcal{P}$ be the set of primes and $M_{\mathbb{Q}}= \{\infty \} \cup \mathcal{P}$ be the set of all places of $\mathbb{Q}$, where we denote by $\infty$ the unique archimedean place of $\mathbb{Q}$. Let $\mu_{\infty}$ denote the Lebesgue measure on $\mathbb{R}^d$ and $\mu_p$ the normalized Haar measure on $\mathbb{Z}_p^d$.  For $T$ a subset of a metric space, let us denote by $\partial (T)$ its boundary, by $\overline{T} $ its closure and by $T^\circ$ its interior.
By $\mathbb{R}_{\geq0}$ we denote the non-negative reals.  A minor of a matrix $A$ is called  basic, if it is the nonzero determinant of a square submatrix of $A$ of maximal order.
\begin{theorem}[\text{\cite[Lemma 20]{poonenAnn}}]\label{poonen}
Let $d$ be a positive integer. Let $U_{\infty} \subset \mathbb{R}^d$, such that $\mathbb{R}_{\geq0} \cdot U_{\infty} = U_{\infty}$ and $\mu_{\infty}(\partial(U_{\infty}))=0.$ Let $s_{\infty}= \frac{1}{2^d}\mu_{\infty}(U_{\infty} \cap [-1,1]^d)$. 
For each prime $p$, let $U_p \subset \mathbb{Z}_p^d$, such that $\mu_p(\partial(U_p)) =0$ and define $s_p = \mu_p(U_p)$. 
Define the following map 
\begin{eqnarray*}
P: \mathbb{Z}^d   &\rightarrow &  2^{M_{\mathbb{Q}}}, \\
a  &\mapsto & \left\{ \nu \in M_{\mathbb{Q}} \mid a \in U_{\nu} \right\}.
\end{eqnarray*}
If the following is satisfied:
\begin{equation} \label{densitycond}
\lim_{M \rightarrow \infty} \bar{\rho}\left( \left\{ a \in \mathbb{Z}^d \mid a \in U_p \  \text{for some prime} \ p > M \right\} \right)=0,
\end{equation}
then:
\begin{itemize}
\item[i)] $\sum\limits_{\nu \in M_{\mathbb{Q}}} s_{\nu}$ converges.
\item[ii)] For $\mathcal{S} \subset 2^{M_{\mathbb{Q}}},$  $\rho(P^{-1}(\mathcal{S}))$ exists, and defines a measure on $2^{M_{\mathbb{Q}}}$.
\item[iii)] For each finite set $S \in 2^{M_{\mathbb{Q}}}$, we have that
\begin{equation*}
\rho(P^{-1}(\{S\})) = \prod_{\nu \in S} s_{\nu} \prod_{\nu \not\in S} (1-s_{\nu}), 
\end{equation*}
and if $\mathcal{S}$ consists of infinite subsets of $2^{M_{\mathbb{Q}}}$, then $\rho(P^{-1}(\mathcal{S}))=0.$
\end{itemize}
\end{theorem}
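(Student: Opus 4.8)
The plan is to reduce the whole statement to two single-place density computations—one $p$-adic, one archimedean—then to patch the places together by treating the archimedean place and finitely many small primes \emph{exactly} (via a Chinese Remainder Theorem argument) and absorbing the tail of large primes into an error term controlled by \eqref{densitycond}.

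First I would establish the single-place densities. For a prime $p$, write $A_p=\{a\in\mathbb{Z}^d: a\in U_p\}$. The hypothesis $\mu_p(\partial(U_p))=0$ lets me approximate $U_p$ in measure by a clopen set, i.e.\ a finite union of residue classes modulo some power $p^k$; since integers equidistribute among residue classes, $\rho(A_p)=\mu_p(U_p)=s_p$. The same approximation shows that a $p$-adic condition and a condition modulo a coprime integer are asymptotically independent, so their densities multiply. For the archimedean place, the cone property $\mathbb{R}_{\geq0}\cdot U_\infty=U_\infty$ together with $\mu_\infty(\partial(U_\infty))=0$ reduces counting $\{a\in\mathbb{Z}^d:a\in U_\infty\}\cap[-H,H[^d$ to a volume, giving density $s_\infty$; the scale invariance also makes the archimedean condition independent of any residue condition, so all of these finitely many events multiply.

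With these in hand, part (i) follows quickly: fix $M$ so large that $\bar\rho(\{a:a\in U_p\text{ for some }p>M\})<1/2$, possible by \eqref{densitycond}. For any finite set of primes $p_1,\dots,p_k>M$, independence gives $\rho(\bigcup_i A_{p_i})=1-\prod_i(1-s_{p_i})$, and this union lies inside the above set, so $\prod_i(1-s_{p_i})\ge 1/2$; taking logarithms and using $-\log(1-x)\ge x$ yields $\sum_i s_{p_i}\le\log 2$. As this holds for every finite subcollection, $\sum_{p>M}s_p\le\log 2$, whence $\sum_\nu s_\nu$ converges. For part (iii) with $S$ finite, I would pick $M$ large enough to contain $S$ and all small primes, compute the density of the finite-place event $V_M=\{a:a\in U_\nu\ \forall\nu\in S,\ a\notin U_\nu\ \forall\nu\le M,\ \nu\notin S\}$ exactly as $\prod_{\nu\in S}s_\nu\prod_{\nu\le M,\,\nu\notin S}(1-s_\nu)$ by the independence above, and then observe that $P^{-1}(\{S\})$ differs from $V_M$ only inside the bad set $\{a:a\in U_p\text{ for some }p>M\}$. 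Squeezing as $M\to\infty$, using \eqref{densitycond} and convergence of the infinite product guaranteed by (i), gives the stated formula. If $T$ is infinite, then $P^{-1}(\{T\})$ lies in the bad set for every $M$, so $\bar\rho(P^{-1}(\{T\}))=0$; the same bound applies to $P^{-1}(\mathcal{S})$ whenever $\mathcal{S}$ consists of infinite subsets.

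Finally, for part (ii) consider the product measure $\mu=\bigotimes_\nu\mathrm{Bernoulli}(s_\nu)$ on $2^{M_{\mathbb{Q}}}$; since $\sum_\nu s_\nu<\infty$ by (i), Borel--Cantelli concentrates $\mu$ on the countable family of finite subsets, and by (iii) it agrees with $\rho\circ P^{-1}$ on each singleton $\{S\}$ with $S$ finite. For arbitrary $\mathcal{S}$, summing (iii) over finite subfamilies gives $\underline\rho(P^{-1}(\mathcal{S}))\ge\mu(\mathcal{S})$; applying this to $\mathcal{S}^C$ and using $\bar\rho(A)=1-\underline\rho(A^C)$ yields $\bar\rho(P^{-1}(\mathcal{S}))\le\mu(\mathcal{S})$, so the density exists and equals $\mu(\mathcal{S})$, which is a measure. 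I expect the main obstacle to be the uniform control of the large-prime tail: isolating the exact finite-place contribution and showing that everything else is absorbed, uniformly in the cutoff $M$, into the set measured by \eqref{densitycond} is precisely where the hypothesis does its work, while the lattice-point-to-volume passage for the cone $U_\infty$ and the boundary-measure-zero approximations are routine but must be handled carefully to license the independence statements.
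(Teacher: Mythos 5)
Your proposal is correct, but note that there is nothing in this paper to compare it against: Theorem \ref{poonen} is quoted verbatim from Poonen--Stoll (their Lemma 20) and the paper supplies no proof of it, using it as a black box. Your argument is essentially a reconstruction of the original one: single-place densities $\rho(U_\nu\cap\mathbb{Z}^d)=s_\nu$ via clopen approximation (licensed by $\mu_p(\partial U_p)=0$) and lattice-point-versus-volume counting for the cone $U_\infty$; multiplicativity of densities over finitely many places via CRT and scale invariance; the tail hypothesis \eqref{densitycond} to squeeze $P^{-1}(\{S\})$ between the finite-level event $V_M$ and $V_M$ minus the bad set; and, for part ii), superadditivity of $\underline{\rho}$ on disjoint unions combined with the complementation identity $\bar{\rho}(A)=1-\underline{\rho}(A^C)$ and the total mass $1$ of the Bernoulli product measure. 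All the essential points are present and in the right logical order (single-place densities and finite independence, then i), then iii), then ii)), and the two minor details left implicit --- that complements inherit the boundary-measure-zero and cone properties up to density-zero discrepancies, and that $s_p<1$ for all primes beyond the cutoff in your logarithm step --- are routine to fill in.
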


To show  that \eqref{densitycond} is satisfied, one can often apply the following useful lemma, that can be deduced from the result in \cite{torsten1991infinite}.

\begin{lemma}[\text{\cite[Lemma 2]{bib:loctoglob}}]\label{showdens}
Let $d$ and $M$ be positive integers. Let $f,g \in \mathbb{Z}[x_1, \ldots, x_d]$ be relatively prime. Define
\begin{equation*}
S_M(f,g) = \left\{ a \in \mathbb{Z}^d \mid  f(a) \equiv g(a) \equiv 0 \mod p \ \text{for some prime} \ p > M \right\},
\end{equation*}
then
\begin{equation*}
\lim_{M \rightarrow \infty} \bar{\rho}(S_M(f,g)) = 0. 
\end{equation*}
\end{lemma}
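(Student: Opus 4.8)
The plan is to bound the number of integer points of $S_M(f,g)$ inside the box $[-H,H[^d$ and to show that, after dividing by $(2H)^d$ and letting $H\to\infty$, the resulting upper density is at most a quantity $\varepsilon(M)$ with $\varepsilon(M)\to 0$. The crucial structural point is that upper density is only \emph{finitely} subadditive, so one cannot simply sum the per-prime contributions $N_p/p^d$ over all $p>M$; instead one must split the primes witnessing membership in $S_M$ according to their size relative to $H$. Concretely, I would fix $H$, and for each $a$ in the box lying in $S_M$ choose a witnessing prime $p>M$ with $p\mid f(a)$ and $p\mid g(a)$, separating the cases $M<p\le 2H$ (``small'' primes) and $p>2H$ (``large'' primes). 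Throughout I would first reduce, by a unimodular integer linear change of coordinates (under which integer-point densities in boxes are preserved), to the situation where both $f$ and $g$ have positive degree in $x_d$ with nonzero constant leading coefficients $c_f,c_g$; the cases where $f$ or $g$ is a nonzero constant are trivial, since then $S_M=\emptyset$ for all large $M$. The central tool is the resultant $R:=\operatorname{Res}_{x_d}(f,g)\in\vZ[x_1,\dots,x_{d-1}]$, which is nonzero because coprimality of $f,g$ in the UFD $\vZ[x_1,\dots,x_d]$ forces, via Gauss's lemma, coprimality over $\vQ(x_1,\dots,x_{d-1})[x_d]$, and which satisfies an identity $uf+vg=R$ with $u,v\in\vZ[x_1,\dots,x_d]$.

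For the small primes I would use the bound $N_p:=\#\{a\bmod p:\ f(a)\equiv g(a)\equiv 0\}=O(p^{d-2})$, valid uniformly for all but finitely many $p$. This follows from the resultant: if $p$ divides neither $c_f$ nor the content of $R$, then reducing $uf+vg=R$ modulo $p$ shows that any common zero $a=(a',a_d)$ has $\bar R(a')=0$, giving $O(p^{d-2})$ choices for $a'$ since $\bar R\neq 0$, while for each such $a'$ the univariate polynomial $\bar f(a',x_d)$ has degree exactly $\deg_{x_d}f$ and hence at most $\deg_{x_d}f$ roots $a_d$. Since each residue class modulo $p$ meets the box in at most $(2H/p+1)^d$ points, the small-prime contribution is at most $\sum_{M<p\le 2H}N_p(2H/p+1)^d$; using $2H/p\ge 1$ in this range together with $N_p=O(p^{d-2})$ bounds this by $C\,H^d\sum_{p>M}p^{-2}$, so after dividing by $(2H)^d$ the small primes contribute at most $C'\sum_{p>M}p^{-2}=\varepsilon(M)\to 0$.

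For the large primes, reducing $uf+vg=R$ shows that a witnessing prime $p>2H$ divides $R(a')$. If $R(a')=0$, then $a'$ lies on the proper subvariety $\{R=0\}$, which meets the box in $O(H^{d-2})$ points, so together with the $2H$ possible values of $a_d$ this locus contributes only $O(H^{d-1})$ points. If $R(a')\neq 0$, then since $|R(a')|\le C_R H^{\deg R}$ the number of primes exceeding $2H$ that divide $R(a')$ is at most $\deg R+O(1)$; for each such prime $p$ the interval $[-H,H[$ has length $2H<p$ and $\bar f(a',x_d)$ again has at most $\deg_{x_d}f$ roots modulo $p$, so at most $\deg_{x_d}f$ admissible values of $a_d$ occur. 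Summing over the $a'$ in the box thus bounds the large-prime contribution by $O(H^{d-1})=o(H^d)$ as well. Combining the two regimes yields $\bar\rho(S_M(f,g))\le C'\sum_{p>M}p^{-2}$, and letting $M\to\infty$ finishes the proof.

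I expect the large-prime regime to be the main obstacle: it is precisely where the naive ``sum the densities'' heuristic breaks down, and it is the heart of Ekedahl's geometric sieve. The subtlety is that almost every integer value $R(a')$ is divisible by some large prime, so the necessary condition $p\mid R(a')$ alone is far too weak; one genuinely needs the fiber estimate showing that such a $p$ cuts out at most $\deg_{x_d}f$ values of $a_d$, which is what keeps the count at the lower order $O(H^{d-1})$. A secondary technical point is that the clean estimate $N_p=O(p^{d-2})$ can fail for the finitely many primes at which $f$ and $g$ acquire a common factor modulo $p$ (equivalently, $p$ divides $c_fc_g$ or the content of $R$); these are harmless, since they are automatically excluded once $M$ exceeds them.
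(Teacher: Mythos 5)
The paper offers no proof of this lemma: it is imported wholesale as Lemma~2 of \cite{bib:loctoglob}, with only the remark that it can be deduced from Ekedahl's sieve \cite{torsten1991infinite}. So there is nothing internal to compare against; what you have written is a correct, self-contained reconstruction of the Ekedahl-type argument that the citation points to. The two pillars of your proof are exactly right: (i) for primes $M<p\le 2H$, the resultant identity $uf+vg=R$ together with a Schwartz--Zippel count gives $N_p=O(p^{d-2})$ for all but finitely many $p$, and summing $N_p\lceil 2H/p\rceil^d$ over this range yields a contribution $O\bigl(H^d\sum_{p>M}p^{-2}\bigr)$; (ii) for primes $p>2H$, the dichotomy $R(a')=0$ (at most $O(H^{d-2})$ choices of $a'$, hence $O(H^{d-1})$ points) versus $R(a')\neq 0$ (at most $\deg R+O(1)$ such primes can divide $R(a')$, each contributing at most $\deg_{x_d}f$ values of $a_d$, since an interval of length $2H<p$ meets each residue class at most once) gives $O(H^{d-1})$, which is negligible after dividing by $(2H)^d$. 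Letting $H\to\infty$ and then $M\to\infty$ finishes the proof, and you correctly identify both the finite-subadditivity obstruction and the fact that the large-prime fiber estimate is the real content. Two small repairs: a unimodular change of coordinates does not preserve box densities exactly, only up to a multiplicative constant (the image of $[-H,H[^d$ sits inside $[-CH,CH[^d$), which is all you need here since the target limit is zero; and the degenerate case $f=0$ deserves a word (coprimality then forces $g=\pm 1$, so $S_M=\emptyset$). As for what each approach buys: the paper's citation keeps its preliminaries short but leaves the sieve as a black box, while your proof makes the lemma self-contained and, notably, its small/large-prime splitting is the same proof pattern the authors themselves use for Theorem~\ref{addendum}, so it meshes naturally with the rest of the paper.
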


\section{The local to global principle for expected values}\label{sec3}

Observe, that  in Theorem \ref{poonen} one could always choose the finite set  $S$ to be the empty set, which for our purpose will be convenient.

\begin{corollary}\label{emptyset}
For all  $\nu \in M_{\mathbb{Q}}$ let $U_{\nu}$ be chosen as in Theorem \ref{poonen}, corresponding to a finite set $  S \in 2^{M_{\mathbb{Q}}}$. Let us define
\begin{equation*}
U'_{\nu} = \begin{cases}  U_{\nu}^C & \nu \in S, \\
U_{\nu} & \nu \not\in S,
\end{cases}
\end{equation*} and hence
\begin{equation*}
s'_{\nu} = \begin{cases}  1-s_{\nu} & \nu \in S, \\
s_{\nu} & \nu \not\in S,
\end{cases}
\end{equation*} 
and define
\begin{eqnarray*}
P': \mathbb{Z}^d   &\rightarrow &  2^{M_{\mathbb{Q}}}, \\
a  &\mapsto & \left\{ \nu \in M_{\mathbb{Q}} \mid a \in U'_{\nu} \right\}.
\end{eqnarray*}
Then we get
\begin{itemize}
\item[i)] $\sum\limits_{\nu \in M_{\mathbb{Q}}} s'_{\nu}$ converges.
\item[ii)] For $\mathcal{S} \subset 2^{M_{\mathbb{Q}}},$  $\rho(P'^{-1}( \mathcal{S}  ))$ exists and defines a measure on $2^{M_{\mathbb{Q}}}$.
\item[iii)] $\rho(P'^{-1}(\{ \emptyset \} )) = \prod\limits_{\nu \in M_{\mathbb{Q}}} (1-s'_{\nu}) = \rho(P^{-1}(\{ S \} ))$,
where $P$ is the map as in Theorem \ref{poonen}.
\end{itemize}
\end{corollary}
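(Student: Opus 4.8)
The plan is to show that the modified system $(U'_\nu)_{\nu \in M_{\mathbb{Q}}}$ satisfies every hypothesis of Theorem \ref{poonen}, and then to obtain i), ii), iii) by applying that theorem directly to the primed data. The guiding observation is that $S$ is \emph{finite}, so passing from $(U_\nu)$ to $(U'_\nu)$ alters only finitely many of the sets; in particular it leaves the asymptotic ``tail'' behaviour over large primes completely untouched, which is exactly what the density condition \eqref{densitycond} constrains.

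First I would check the pointwise hypotheses. For every prime $p$ one has $\partial(U'_p) = \partial(U_p)$, since a set and its complement share the same boundary ($\overline{U} \cap \overline{U^C} = \partial(U) = \partial(U^C)$); hence $\mu_p(\partial(U'_p)) = 0$. When $p \in S$ we get $s'_p = \mu_p(U_p^C) = 1 - \mu_p(U_p) = 1 - s_p$, using $\mu_p(\mathbb{Z}_p^d) = 1$, and when $p \notin S$ nothing changes. For the archimedean place, if $\infty \notin S$ there is nothing to verify; if $\infty \in S$ then $U'_\infty = U_\infty^C$ again has $\mu_\infty(\partial(U'_\infty)) = \mu_\infty(\partial(U_\infty)) = 0$, and
\begin{equation*}
s'_\infty = \frac{1}{2^d}\mu_\infty(U_\infty^C \cap [-1,1]^d) = \frac{1}{2^d}\big(2^d - \mu_\infty(U_\infty \cap [-1,1]^d)\big) = 1 - s_\infty .
\end{equation*}
The only subtle point is the cone condition $\mathbb{R}_{\geq0}\cdot U'_\infty = U'_\infty$: the complement of an $\mathbb{R}_{\geq0}$-invariant set is invariant only under $\mathbb{R}_{>0}$ and may fail to contain the origin. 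I expect this to be the one genuinely delicate step, but it is harmless, exactly in the spirit of the (commented) remark following Lemma \ref{showdens}: the origin is a single point of Lebesgue measure zero, so replacing $U'_\infty$ by $U'_\infty \cup \{0\}$ restores the condition without changing $s'_\infty$, and no density is affected by a single point.

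Next I would verify the density condition \eqref{densitycond} for the primed system, where the finiteness of $S$ does all the work: choosing $M$ larger than every prime lying in $S$ forces $U'_p = U_p$ for all primes $p > M$, so for such $M$ the set $\{a \in \mathbb{Z}^d \mid a \in U'_p \text{ for some prime } p > M\}$ coincides with the corresponding set for $(U_p)$. Hence its upper density has the same limit as for the original system, which vanishes by \eqref{densitycond}. Thus $(U'_\nu)$ meets all hypotheses of Theorem \ref{poonen}, and applying that theorem to the primed data yields i) and ii) at once.

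Finally, for iii) I would apply part iii) of Theorem \ref{poonen} to $(U'_\nu)$ with the \emph{empty} finite set, giving
\begin{equation*}
\rho(P'^{-1}(\{\emptyset\})) = \prod_{\nu \in \emptyset} s'_\nu \prod_{\nu \not\in \emptyset}(1 - s'_\nu) = \prod_{\nu \in M_{\mathbb{Q}}}(1 - s'_\nu).
\end{equation*}
It then remains only to recognise this product: by definition $1 - s'_\nu = s_\nu$ for $\nu \in S$ and $1 - s'_\nu = 1 - s_\nu$ for $\nu \notin S$, so $\prod_{\nu}(1 - s'_\nu) = \prod_{\nu \in S} s_\nu \prod_{\nu \notin S}(1 - s_\nu)$, which is precisely $\rho(P^{-1}(\{S\}))$ by part iii) of Theorem \ref{poonen} applied to the original system. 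This closes the chain of equalities asserted in iii).
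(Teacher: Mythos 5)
Your proposal is correct and takes essentially the same approach as the paper's own (omitted as ``straightforward'', but present in the source as a commented-out block) argument: verify that the primed system satisfies all hypotheses of Theorem \ref{poonen} --- via $\partial(U)=\partial(U^C)$, complementation of measures, finiteness of $S$ for Condition \eqref{densitycond}, and the origin adjustment for the cone condition --- and then apply part iii) of that theorem with the empty set, matching the product against $\rho(P^{-1}(\{S\}))$. Your handling of the one delicate point (that $U_\infty^C$ is only $\mathbb{R}_{>0}$-invariant, repaired by adjoining the origin, which affects no measure or density) is exactly the fix the authors themselves record in a commented-out remark.
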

The proof is straightforward and thus we omit the details.

For a fixed $\nu \in M_\mathbb{Q}$ and $U_\nu $ as in Theorem \ref{poonen}, the density of $U_\nu \cap \mathbb{Z}^d$  can be   computed as follows.

\begin{corollary}\label{denofU}
Let $\nu \in M_{\mathbb{Q}}$ and $U_{\nu}$ be chosen as in Theorem \ref{poonen}, then 
$$\rho( U_{\nu} \cap \mathbb{Z}^d) = \mu_{\nu}(U_{\nu})= s_{\nu}.$$
\end{corollary}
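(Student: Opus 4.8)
The plan is to deduce the statement directly from Theorem \ref{poonen} by applying it to a degenerate system of local sets concentrated at the single place $\nu$. Concretely, fix $\nu \in M_{\mathbb{Q}}$ and define a new family $(\widetilde{U}_w)_{w \in M_{\mathbb{Q}}}$ by setting $\widetilde{U}_\nu = U_\nu$ and $\widetilde{U}_w = \emptyset$ for every $w \neq \nu$. The first step is to check that this family is admissible, i.e.\ that it satisfies the hypotheses of Theorem \ref{poonen}. For $w \neq \nu$ this is immediate: the empty set trivially satisfies $\vR_{\geq 0} \cdot \emptyset = \emptyset$ at the archimedean place and has empty, hence measure-zero, boundary at every place, so that $\widetilde{s}_w = 0$; the set $\widetilde{U}_\nu = U_\nu$ already satisfies the required conditions by hypothesis.

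The second step is to verify the density condition \eqref{densitycond} for the new family. If $\nu = \infty$, then $\widetilde{U}_p = \emptyset$ for every prime $p$, so the set in \eqref{densitycond} is empty for all $M$ and the limit vanishes. If $\nu = p_0$ is a prime, then $\widetilde{U}_p = \emptyset$ for all primes $p \neq p_0$, so for every $M \geq p_0$ the set in \eqref{densitycond} is again empty; the limit is therefore zero in both cases, and \eqref{densitycond} holds trivially.

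With admissibility established, the third step is to identify the relevant fibre of the map $\widetilde{P}\colon \vZ^d \to 2^{M_{\mathbb{Q}}}$, $a \mapsto \{w \in M_{\mathbb{Q}} \mid a \in \widetilde{U}_w\}$, associated to the new family. Since $\widetilde{U}_w = \emptyset$ for all $w \neq \nu$, an element $a \in \vZ^d$ satisfies $\widetilde{P}(a) = \{\nu\}$ exactly when $a \in U_\nu$, so that $\widetilde{P}^{-1}(\{\{\nu\}\}) = U_\nu \cap \vZ^d$. Applying part iii) of Theorem \ref{poonen} with the finite set $S = \{\nu\}$ then gives
\begin{equation*}
\rho(U_\nu \cap \vZ^d) = \rho\bigl(\widetilde{P}^{-1}(\{\{\nu\}\})\bigr) = \widetilde{s}_\nu \prod_{w \neq \nu} (1 - \widetilde{s}_w) = s_\nu \prod_{w \neq \nu} 1 = s_\nu,
\end{equation*}
using $\widetilde{s}_w = 0$ for $w \neq \nu$. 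For a prime $\nu = p$ the remaining equality $s_p = \mu_p(U_p)$ is just the definition of $s_p$ from Theorem \ref{poonen}.

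I do not expect any genuine obstacle: the construction is essentially a bookkeeping reduction to the already-proved Theorem \ref{poonen}. The only points that require a little care are the uniform handling of the archimedean and non-archimedean places when verifying that the degenerate family is admissible, and the reading of the middle equality $\mu_\nu(U_\nu) = s_\nu$ at the archimedean place, where it must be interpreted through the normalisation $s_\infty = \tfrac{1}{2^d}\mu_\infty(U_\infty \cap [-1,1]^d)$ defining $s_\infty$ rather than as the (possibly infinite) Lebesgue measure of the cone $U_\infty$.
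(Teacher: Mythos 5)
Your proof is correct and follows essentially the same route as the paper: both define the degenerate system $U'_{\nu'} = U_\nu$ for $\nu' = \nu$ and $U'_{\nu'} = \emptyset$ otherwise, and apply part iii) of Theorem \ref{poonen} to the fibre over $\{\nu\}$. Your version simply spells out the admissibility checks and the archimedean normalisation caveat that the paper leaves implicit.
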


\begin{proof}
We set
\begin{equation*}
U'_{\nu'}= \begin{cases} U_{\nu} & \nu' = \nu, \\ \emptyset & \nu' \neq \nu,
\end{cases}
\end{equation*} and let 
\begin{eqnarray*}
P': \mathbb{Z}^d   &\rightarrow &  2^{M_{\mathbb{Q}}}, \\
a  &\mapsto & \left\{ \nu \in M_{\mathbb{Q}} \mid a \in U'_{\nu} \right\}.
\end{eqnarray*}
Then  by Theorem \ref{poonen} we have $\rho(U_\nu \cap \mathbb{Z}^d) =\rho(P'^{-1}(\{\nu\}))=s_\nu$.
\end{proof}

We observe, that the elements $A \in \mathbb{Z}^d$, which are in $U_\nu$  for infinitely many $\nu \in M_{\mathbb{Q}}$ have density zero.
\begin{lemma}
Let $(U_\nu)_{\nu\in M_\mathbb{Q}}$ be as in Theorem \ref{poonen}. 
 Then we have that
$$ \rho(\{A \in \mathbb{Z}^d \mid A \in U_{\nu} \ \text{for infinitely many} \ \nu \in M_{\mathbb{Q}}\})=0.$$
\end{lemma}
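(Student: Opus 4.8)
The plan is to prove the stronger statement that the \emph{upper} density of the set in question already vanishes; the claim then follows at once, since $0 \le \underline{\rho} \le \bar{\rho}$ forces the ordinary density to exist and equal zero.

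The first step is a reduction from places to primes. Since $M_{\mathbb{Q}} = \{\infty\} \cup \mathcal{P}$ contains exactly one archimedean place, an element $A \in \mathbb{Z}^d$ lies in $U_\nu$ for infinitely many $\nu \in M_{\mathbb{Q}}$ if and only if it lies in $U_p$ for infinitely many primes $p$. Thus, writing $T = \{A \in \mathbb{Z}^d \mid A \in U_\nu \text{ for infinitely many } \nu \in M_{\mathbb{Q}}\}$, I may regard $T$ as the set of $A$ lying in $U_p$ for infinitely many primes $p$.

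Next I would tie $T$ to the tail sets appearing in hypothesis \eqref{densitycond}. For each positive integer $M$, set $B_M = \{a \in \mathbb{Z}^d \mid a \in U_p \text{ for some prime } p > M\}$. If $A$ belongs to $U_p$ for infinitely many primes, then for every $M$ there is at least one prime $p > M$ with $A \in U_p$, so $A \in B_M$. Hence $T \subseteq B_M$ for every $M$. By monotonicity of the upper density we get $\bar{\rho}(T) \le \bar{\rho}(B_M)$ for all $M$, and letting $M \to \infty$ the right-hand side tends to $0$ by \eqref{densitycond}. Therefore $\bar{\rho}(T) = 0$, and the squeeze $0 \le \underline{\rho}(T) \le \bar{\rho}(T) = 0$ yields $\rho(T) = 0$.

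The argument is essentially immediate, so there is no genuine obstacle; the only point deserving a line of justification is the monotonicity of $\bar{\rho}$. This follows directly from the definition: the inclusion $T \subseteq B_M$ gives $\abs{T \cap [-H,H[^d} \le \abs{B_M \cap [-H,H[^d}$ for every $H$, and dividing by $(2H)^d$ and passing to the $\limsup$ preserves the inequality. No further input beyond Theorem \ref{poonen} and its standing hypothesis \eqref{densitycond} is needed.
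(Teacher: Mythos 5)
Your proof is correct and follows essentially the same route as the paper's: both exploit the inclusion of the set $I$ (your $T$) into the tail sets $C_M$ (your $B_M$), apply monotonicity of the upper density, and invoke hypothesis \eqref{densitycond} as $M \to \infty$. The only additions are your explicit reduction from places to primes (absorbing the single archimedean place) and the justification of monotonicity, both of which the paper leaves implicit.
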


\begin{proof}
Since $U_\nu$ were chosen as in Theorem \ref{poonen}, condition \eqref{densitycond} holds, i.e.,
\begin{equation*} 
\lim\limits_{M \rightarrow \infty} \bar{\rho}\left( \left\{ A \in \mathbb{Z}^d \mid A \in U_p \  \text{for some prime} \ p > M \right\} \right)=0.
\end{equation*}
Let us call
\begin{align*}
C_M & = \left\{ A \in \mathbb{Z}^d \mid A \in U_p \  \text{for some prime} \ p > M \right\}, \\
I &= \left\{ A \in \mathbb{Z}^d \mid A \in U_{\nu} \ \text{for infinitely many} \ \nu \in M_{\mathbb{Q}} \right\}.
\end{align*}
Clearly $I \subset C_M$ for all $M \in \mathbb{N}$, hence $$ \overline{\rho}(I) \leq   \lim\limits_{M \to \infty} \overline{\rho}(C_M) = 0$$
and thus $\rho(I)=0$.
\end{proof}

Over $\mathbb{Z}^d$ one can  give a definition of \emph{mean} or \emph{expected value} (see for example \cite{primenumbereisenstein}), as we will now explain.
Observe, that an event with ``probability'' zero should not have any influence on the expected value, this legitimates that over  $\mathbb{Z}^d$ we will exclude the elements $A \in \mathbb{Z}^d$, which are in infinitely many $U_{\nu}$, namely $A \in I$. 
Let us define $[-H,H[^d_I= ([-H,H[^d \cap \mathbb{Z}^d) \setminus I$.

\begin{definition}\label{meandef}
Let  $H$ and $d$  be positive integers and assume that $(U_\nu)_{\nu \in M_\mathbb{Q}}$ satisfy the assumptions of 
 Theorem \ref{poonen}, then we define \textit{the expected value
 of the system} $( U_\nu)_{\nu \in M_\mathbb{Q}}$ to be
\begin{equation*}
\mu  = \lim\limits_{H \to \infty} \displaystyle{\frac{\sum\limits_{A \in [-H,H[^d_I  } \mid \{ \nu \in M_{\mathbb{Q}} \mid A \in U_{\nu} \} \mid }{  (2H)^d} },
\end{equation*}
if it exists.
\end{definition}
This limit essentially gives the expected value of the number of places $\nu$, such that a ``random'' element in $\mathbb{Z}^d$ is in $U_\nu$.

\begin{remark}
The reader should notice that the mean of the system $(U_\nu)_{\nu \in M_\mathbb{Q}}$
should be thought as the ``expected value'' of the function $a\mapsto |P(a)|$, where $P$ is the map of Theorem \ref{poonen}.
\end{remark}.

\begin{definition}
For a set $T$, for which we can compute its density via the local to global principle as in Theorem \ref{poonen}, we say that a \emph{system $(U_\nu)_{\nu \in M_\mathbb{Q}}$ corresponds to $T$}, if $T^C= P^{-1}(\{\emptyset\})$.
\end{definition}

Observe that we can restrict Definition  \ref{meandef} to subsets of $ [-H,H[^d_I$, i.e.,
we define the expected value of the system $(U_\nu)_{\nu \in M_\mathbb{Q}}$ restricted to $ T \subset[-H,H[^d_I$ to be 
\begin{equation*}
\mu_T  = \lim\limits_{H \to \infty} \displaystyle{\frac{\sum\limits_{A \in [-H,H[^d_I\cap T  } \mid \{ \nu \in M_{\mathbb{Q}} \mid A \in U_{\nu} \} \mid }{ \mid  [-H,H[^d_I \cap T \mid} },
\end{equation*}
if it exists.
Note, that this is analogous to the conditional expected value.

\begin{remark}
If $(U_\nu)_{\nu\in M_{\mathbb{Q}}}$ corresponds to $T$, then the expected value of the system $(U_\nu)_{\nu\in M_\mathbb{Q}}$ restricted to $T$
should be thought as the ``expected value'' of the function $a\mapsto |P(a)|$, where $P$ is the map of Theorem \ref{poonen}, when restricted to the elements $a$ of $\vZ^d$ such that $|P(a)|\geq 1$.
\end{remark}

One can easily pass from $\mu$ to $\mu_T$ and viceversa:

\begin{lemma}\label{passtoT}
If the density of $T$ exists and is nonzero and $T$ is such that $T^C \subseteq P^{-1}(\{ \emptyset\})$, then  $\mu$ exists iff $\mu_T$ exists. Furthermore, in that case it holds that $\mu=\mu_T\rho(T)$.
\end{lemma}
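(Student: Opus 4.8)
The plan is to reduce both limits to one and the same numerator, and then to extract the factor $\rho(T)$ from the denominators using the hypotheses $T^C \subseteq P^{-1}(\{\emptyset\})$ and $\rho(T) > 0$. First I would record that the condition $T^C \subseteq P^{-1}(\{\emptyset\})$ forces every $A \in [-H,H[^d_I$ with $A \notin T$ to satisfy $\{\nu \in M_{\mathbb{Q}} \mid A \in U_\nu\} = \emptyset$, so such $A$ contribute nothing to the counting sum. Writing $N(H) = \sum_{A \in [-H,H[^d_I} |\{\nu \in M_{\mathbb{Q}} \mid A \in U_\nu\}|$ for the numerator appearing in the definition of $\mu$, this observation gives the exact identity $N(H) = \sum_{A \in [-H,H[^d_I \cap T} |\{\nu \in M_{\mathbb{Q}} \mid A \in U_\nu\}|$ for every $H$, i.e.\ the numerators defining $\mu$ and $\mu_T$ coincide.

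The key step is then to show that the denominator of $\mu_T$, normalised by $(2H)^d$, converges to $\rho(T)$. Since $[-H,H[^d_I \cap T = (T \cap [-H,H[^d) \setminus I$, I would split off the part lying in $I$ and write $\frac{|[-H,H[^d_I \cap T|}{(2H)^d} = \frac{|T \cap [-H,H[^d|}{(2H)^d} - \frac{|T \cap [-H,H[^d \cap I|}{(2H)^d}$. The first term converges to $\rho(T)$ because the density of $T$ is assumed to exist, and the second term is bounded in absolute value by $\frac{|I \cap [-H,H[^d|}{(2H)^d}$, which tends to $0$ since $\rho(I) = 0$ by the Lemma proved above. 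Hence $\frac{|[-H,H[^d_I \cap T|}{(2H)^d} \to \rho(T)$; as $\rho(T) > 0$, the quantity $|[-H,H[^d_I \cap T|$ is nonzero for all large $H$, so $\mu_T$ is well defined in the first place.

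Finally I would combine the two via the factorisation $\frac{N(H)}{(2H)^d} = \frac{N(H)}{|[-H,H[^d_I \cap T|}\cdot\frac{|[-H,H[^d_I \cap T|}{(2H)^d}$, whose second factor converges to $\rho(T) \neq 0$. If $\mu_T$ exists, the first factor converges to $\mu_T$, so the product converges, proving that $\mu$ exists and $\mu = \mu_T\,\rho(T)$. Conversely, if $\mu$ exists, I would divide by the second factor, which is bounded away from $0$ for large $H$, to conclude that the first factor converges to $\mu/\rho(T)$, so that $\mu_T$ exists and again $\mu = \mu_T\,\rho(T)$.

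I expect the only genuinely delicate point to be the denominator asymptotics in the second paragraph, namely the legitimacy of passing from the lattice-point count $|[-H,H[^d_I \cap T|$ to the density $\rho(T)$ after deleting the set $I$; this is exactly where $\rho(I)=0$ is needed. Everything else is a routine manipulation of limits, made harmless by the assumption $\rho(T) > 0$, which keeps all denominators eventually bounded away from zero.
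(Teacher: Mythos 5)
Your proof is correct and follows essentially the same route as the paper's: both use the hypothesis $T^C \subseteq P^{-1}(\{\emptyset\})$ to identify the numerators of $\mu$ and $\mu_T$, and both factor the limit through the convergence of $|[-H,H[^d_I \cap T|/(2H)^d$ to $\rho(T)\neq 0$. If anything, you are more careful than the paper, which asserts the denominator asymptotics without explicitly invoking $\rho(I)=0$ and writes the manipulation in one direction only, whereas you justify that step and spell out both implications of the equivalence.
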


\begin{proof}

We observe that
\begin{align*} \mu_{T} &= \lim\limits_{H \to \infty} \displaystyle{\frac{\sum\limits_{A \in [-H,H[^d_I \cap T} \mid \{ \nu \in M_{\mathbb{Q}} \mid A \in U_{\nu} \} \mid }{ \mid [-H,H[^d_I \cap T \mid} } \\
& =  \lim\limits_{H \to \infty} \displaystyle{\frac{\sum\limits_{A \in [-H,H[^d_I \cap T} \mid \{ \nu \in M_{\mathbb{Q}} \mid A \in U_{\nu} \} \mid }{ (2H)^d }} \displaystyle{\frac{(2H)^d}{\mid [-H,H[^d_I \cap T \mid}} \\ 
&=\lim\limits_{H \to \infty} \displaystyle{\frac{\sum\limits_{A \in [-H,H[^d_I \cap T} \mid \{ \nu \in M_{\mathbb{Q}} \mid A \in U_{\nu} \} \mid }{ (2H)^d }} \displaystyle{\frac{1}{\rho(T)}}.
\end{align*}

Let us define 
 $$\tau(A,\nu) = \begin{cases} 1 & A \in U_{\nu}, \\ 0 & \text{else.}
 \end{cases}$$

Note that one can write $[-H,H[^d_I \cap T$ as $[-H,H[^d_I \setminus ([-H,H[^d_I\cap T^C)$, doing so we observe that we can ignore  $T$: 
\begin{align*} \mu_{T}\rho(T) &= \lim\limits_{H \to \infty} \displaystyle{\frac{\sum\limits_{A \in [-H,H[^d_I \cap T} \sum\limits_{\nu \in M_{\mathbb{Q}}} \tau(A,\nu)}{ (2H)^d }} \\
&= \lim\limits_{H \to \infty} \displaystyle{\frac{\sum\limits_{A \in [-H,H[^d_I } \sum\limits_{\nu \in M_{\mathbb{Q}}} \tau(A,\nu) - \sum\limits_{A \in [-H,H[^d_I \cap T^c} \sum\limits_{\nu \in M_{\mathbb{Q}}} \tau(A,\nu)}{ (2H)^d }}. 
\end{align*}
Since $T^C \subseteq P^{-1}(\{\emptyset\})$,  it holds that $\tau(A,\nu)=0$ for all $A \in T^C$ and hence we are left with $\mu$.
\end{proof}

In the applications of this paper one usually chooses $T^C  =P^{-1}(\{ \emptyset \})$, and computes the expected value restricted to $ T$. This is a natural choice, since  by the definition of $T^C$ it holds that none of its elements lie in any of the $U_\nu$, thus we are only considering the subset $T$, where nonzero values are added to the expected value. \\
%

One would now expect that, if the $p$-adic measures of the $U_p$'s of Theorem \ref{poonen} were essentially behaving like probabilities, one would have that the mean of the system $(U_p)_{p\in \mathcal P}$ (as as defined in Definition \ref{meandef}) would be equal to $\sum_{p\in \mathcal{P}} s_p$, since the density of $U_p\cap \vZ^d$ is equal to $s_p$ (for example, this always happens when one has $U_p=\emptyset$ for all but finitely many $U_p$'s). Note that this would also be the result if we could simply move the limit inside the series. This is not the case: in fact,  Condition \eqref{densitycond} of Theorem \ref{poonen}, is not enough to ensure the existence of the mean as in the natural Definition \ref{meandef}. The next example shows a case where Condition \eqref{densitycond} is verified, but the mean does not exist.

\begin{example} \label{st_ex}
	We set $U_\infty = \emptyset$ and for all $j\in \mathbb{N}$ with $2^n \leq j < 2^{n+1}$ we define $U_{p_j}=\{ 2^n\}$, where $p_j$ denotes the $j$th prime number. 	As  $\vZ_p$ with the $p$-adic metric is a metric space, we get that $U_{p_j}$ is a closed set and thus Borel-measurable, of measure zero. 
	Furthermore, every ball in the $p$-adic metric contains infinitely many elements, which implies 
	$\partial (U_{p_j}) = \overline{U}_{p_j} \setminus U_{p_j}^\circ = U_{p_j} \setminus \emptyset = U_{p_j}$. 
	As the $p$-adic Haar measure is invariant under translation and normalized, we get that all finite sets are null sets. In particular, we get $\mu_{p_j}(\partial (U_{p_j}))=0$. Furthermore, we have 
	\begin{align*}
	 \overline{\rho}\left( \bigcup_{p\in \mathcal{P} : p>M} U_p\right)
	\leq  \overline{\rho}\left( \bigcup_{\nu \in M_\mathbb{Q}} U_\nu \right) = \overline{\rho}(\{2^n \ \mid \ n\in \mathbb{N} \}) =0.
	\end{align*}
	Hence, Condition \eqref{densitycond} is satisfied, even without taking the limit in $M$. 
	Let $n\in \mathbb{N}$, then  for $2^n \leq H < 2^{n+1}$ we have
	\begin{align*}
	\{ \nu \in M_\mathbb{Q} \ \mid \  [-H,H[ \cap U_\nu \neq \emptyset \} = \{ p_1, p_2, \dots, p_{2^{n+1} -1} \} .
	\end{align*}
	Thus, we have for all $n\in \mathbb{N}$
	\begin{align*}
	\sum_{A \in [-2^{n+1}, 2^{n+1}[_I} \frac{\vert \{ \nu \in M_\mathbb{Q} \ : \ A \in U_\nu \} \vert }{2\cdot 2^{n+1}}
	= \frac{2^{n+1}-1}{2\cdot 2^{n+1}} \stackrel{n\rightarrow \infty}{\longrightarrow} \frac{1}{2}
	\end{align*}

	and
	\begin{align*}
	\sum_{A \in [-(2^{n+1}+1), 2^{n+1}+1[_I} \frac{\vert \{ \nu \in M_\mathbb{Q} \ : \ A \in U_\nu \} \vert }{2(2^{n+1}+1)}
	= \frac{2^{n+2}-1}{2(2^{n+1}+1)} \stackrel{n\rightarrow \infty}{\longrightarrow} 1.
	\end{align*}
	Hence, the expected value of the system $(U_\nu)_{\nu \in M_\mathbb{Q}}$ does not exist, even though it satisfies all conditions of Theorem \ref{poonen}.
\end{example}



Now we 
state the main theorem, which is  a local to global principle for expected values, extending the results in \cite[Lemma 20]{poonenAnn}.

\begin{theorem}\label{addendum}
Let $H$ and $d$ be   positive integers. 
Let $U_{\infty} \subset \mathbb{R}^d$, such that $\mathbb{R}_{\geq0} \cdot U_{\infty} = U_{\infty}$ and $\mu_{\infty}(\partial(U_{\infty}))=0.$ Let $s_{\infty}= \frac{1}{2^d}\mu_{\infty}(U_{\infty} \cap [-1,1]^d)$. 
For each prime $p$, let $U_p \subset \mathbb{Z}_p^d$, such that $\mu_p(\partial(U_p)) =0$ and define $s_p = \mu_p(U_p)$. 
Define the following map 
\begin{eqnarray*}
P: \mathbb{Z}^d   &\rightarrow &  2^{M_{\mathbb{Q}}}, \\
a  &\mapsto & \left\{ \nu \in M_{\mathbb{Q}} \mid a \in U_{\nu} \right\}.
\end{eqnarray*}
If \eqref{densitycond} is satisfied and for some $\alpha \in [0, \infty)$ there exists an absolute constant $c \in \mathbb{Z}$, such that for all $H\geq 1$ and for all $A \in [-H,H[^d_I$ one has that
\begin{equation}\label{newcond}
\left\vert \left\{ p\in \mathcal{P}  \mid p > H^\alpha, A \in U_p \cap [-H, H[_I^d  \right\} \right\vert <c
\end{equation} 
and and that there exists a sequence  $(v_p)_{p\in \mathcal{P}}$, such that for all $p<H^\alpha$ 
one has that
\begin{eqnarray}
\mid U_p \cap [-H,H[^d_I \mid & \leq  v_p(2H)^d, 
\label{newcond2} \\
\sum_{p \in \mathcal{P}} v_p & \text{converges}, \label{newcond3}
\end{eqnarray}
then it follows that the mean 
of the system $( U_\nu)_{\nu \in M_{\mathbb{Q}}}$, exists and is given by: \begin{align*}
\mu  &=  \sum\limits_{\nu \in M_{\mathbb{Q}}} s_{\nu}. \\
\end{align*}

\end{theorem}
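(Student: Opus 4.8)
The plan is to reduce the mean to a sum of densities and then justify exchanging the limit in $H$ with the summation over $\nu$. First I would rewrite the numerator of Definition \ref{meandef} using the indicator $\tau(A,\nu)$ introduced in the proof of Lemma \ref{passtoT}: since every $A\in[-H,H[^d_I$ lies in only finitely many $U_\nu$ and $[-H,H[^d_I$ is a finite set, the double sum $\sum_{A}\sum_{\nu}\tau(A,\nu)$ is a finite sum of non-negative terms, so I may freely interchange the order of summation and obtain
$$\mu=\lim_{H\to\infty}\frac{1}{(2H)^d}\sum_{\nu\in M_{\mathbb{Q}}}\left|U_\nu\cap[-H,H[^d_I\right|.$$
Writing $f_\nu(H)=|U_\nu\cap[-H,H[^d_I|/(2H)^d$, Corollary \ref{denofU} together with $\rho(I)=0$ gives $f_\nu(H)\to s_\nu$ for each fixed $\nu$, so the whole problem reduces to showing $\lim_{H}\sum_{\nu}f_\nu(H)=\sum_{\nu}s_\nu$, i.e.\ that the limit and the sum may be interchanged. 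Example \ref{st_ex} shows this interchange can fail under \eqref{densitycond} alone, so the new hypotheses must enter precisely here.

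Next I would peel off the archimedean term, which tends to $s_\infty$ by Corollary \ref{denofU}, and split the sum over primes at a fixed cutoff $M$ as $\sum_{p}f_p(H)=\sum_{p\le M}f_p(H)+\sum_{p>M}f_p(H)$. The finite head satisfies $\sum_{p\le M}f_p(H)\to\sum_{p\le M}s_p$ as $H\to\infty$. For the tail I would split once more according to the moving threshold $H^\alpha$, into the block $M<p\le H^\alpha$ and the block $p>\max(M,H^\alpha)$. On the first block hypothesis \eqref{newcond2} gives $f_p(H)\le v_p$, so this block is bounded by the convergent tail $\sum_{p>M}v_p$ uniformly in $H$; this is the dominated-convergence (Tannery) part of the argument.

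The heart of the proof, and the step I expect to be the main obstacle, is the block $p>\max(M,H^\alpha)$. Here $f_p(H)$ cannot be dominated by $v_p$, and the naive estimate coming from \eqref{newcond} alone only yields a constant rather than something vanishing. The key is to combine \eqref{newcond} with \eqref{densitycond}: every prime in this block exceeds $H^\alpha$, so \eqref{newcond} bounds by $c-1$ the number of such primes $p$ with $A\in U_p$ for each fixed $A\in[-H,H[^d_I$; converting the sum over primes into a count of elements then gives
$$\sum_{p>\max(M,H^\alpha)}f_p(H)\le (c-1)\,\frac{\left|\left\{A\in[-H,H[^d_I\mid A\in U_p\text{ for some }p>M\right\}\right|}{(2H)^d},$$
where I used $p>M$ to match the counted elements with the set appearing in \eqref{densitycond}. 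Passing to $\limsup_{H}$ bounds the right-hand side by $(c-1)\,\bar{\rho}(\{a\mid a\in U_p\text{ for some }p>M\})$.

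Finally I would assemble the three estimates: combining them yields $\limsup_{H}\sum_{p>M}f_p(H)\le\sum_{p>M}v_p+(c-1)\,\bar{\rho}(\{a\mid a\in U_p\text{ for some }p>M\})$, while $\liminf_{H}\sum_{p}f_p(H)\ge\sum_{p\le M}s_p$ because the tail terms are non-negative. Letting $M\to\infty$, the Tannery tail $\sum_{p>M}v_p$ vanishes by \eqref{newcond3} and the density term vanishes by \eqref{densitycond}, which squeezes $\lim_{H}\sum_{p}f_p(H)=\sum_{p}s_p$; adding back the archimedean contribution $s_\infty$ gives $\mu=\sum_{\nu\in M_{\mathbb{Q}}}s_\nu$. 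The one delicate point of bookkeeping is that on the critical block one needs $p>H^\alpha$ (to invoke \eqref{newcond}) and $p>M$ (to invoke \eqref{densitycond}) simultaneously, which is exactly why the split is made at $\max(M,H^\alpha)$ rather than at $H^\alpha$; this also keeps the argument valid uniformly for every $\alpha\ge0$, including the degenerate case $\alpha=0$.
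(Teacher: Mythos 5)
Your proof is correct and takes essentially the same route as the paper's: the same three-block decomposition of the sum (places $\nu=\infty$ or $\nu\leq M$, primes $M<p<H^\alpha$, primes beyond the moving threshold), with Corollary \ref{denofU} giving the head, \eqref{newcond2}--\eqref{newcond3} dominating the middle block, \eqref{newcond} combined with \eqref{densitycond} killing the tail, and the same $\limsup$/$\liminf$ squeeze as $M\to\infty$. If anything, your cutoff at $\max(M,H^\alpha)$ handles the case $\alpha=0$ more uniformly than the paper's separate (and somewhat terse) treatment of that case.
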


\begin{remark}
In a nuthshell, the additional conditions give control on the number of $U_p$ for which an element in $[-H,H[_I^d$ can live in: Condition \ref{newcond} gives control in the large $p$ regime (and small $H$), and Condition \ref{newcond2} (with \ref{newcond3}) gives control in the small $p$ regime (and large $H$). None of the conditions can be removed, as we will show later with counterexamples in each case.
\end{remark}

 \begin{proof} 

	For $H,M>0$ we split
	\begin{align*}
	\sum\limits_{A \in [-H,H[^d_I  } \frac{\mid \{ \nu \in M_{\mathbb{Q}} \mid A \in U_{\nu} \} \mid }{ (2H)^d}
	= s_1(H,M) + s_2(H,M) + s_3(H,M),
	\end{align*}
	where
	\begin{align*}
	s_1(H,M) &= \sum\limits_{A \in [-H,H[^d_I  } \frac{\mid \{ p \in \mathcal{P} \mid M\leq H^\alpha <p, A \in U_p \} \mid }{ (2H)^d}, \\
	s_2(H,M) &= \sum\limits_{A \in [-H,H[^d_I  } \frac{\mid \{ p \in \mathcal{P} \mid M<p<H^\alpha, A \in U_p \} \mid }{ (2H)^d}, \\
	s_3(H,M) &= \sum\limits_{A \in [-H,H[^d_I  } \frac{\mid \{ \nu \in \mathcal{P} \mid \nu =\infty \text{ or } \nu\leq M, A \in U_{\nu} \} \mid }{ (2H)^d}.
	\end{align*}
	We are going to show that for $j\in \{1,2\}$ we have 
	$$\limsup_{M \to \infty} \limsup_{H \to \infty} \vert s_j(H,M) \vert=0 \quad \text{and} \quad \lim_{M \rightarrow \infty} \lim_{H \rightarrow \infty} s_3(H,M) = \sum_{\nu_{\nu \in M_\mathbb{Q}}} s_\nu,$$
	which readily implies that 
	\begin{align*}
	\mu  = \lim\limits_{H \to \infty} \displaystyle{\frac{\sum\limits_{A \in [-H,H[^d_I  } \mid \{ \nu \in M_{\mathbb{Q}} \mid A \in U_{\nu} \} \mid }{(2H)^d}} 
	\end{align*}
	exists and that
	\begin{align*}
	\mu = \sum_{\nu \in M_{\mathbb{Q}}} s_\nu.
	\end{align*}
	First we consider the case $\alpha \neq 0$.
	Let us define for $H>0$ and $A\in \mathbb{Z}^d$
	\begin{equation} \label{def lAH}
	\mid \left\{ p\in \mathcal{P}  \mid p > H^\alpha, A \in U_p \cap [-H, H[_I^d  \right\} \mid = \ell_{A,H}.
	\end{equation}
	
	Notice that thanks to condition  \eqref{newcond} there exists a constant $c>0$ independent of $A$ and $H$ such that
	\begin{align*}
	\ell_{A,H}< c.
	\end{align*}
Therefore, we get 
	\begin{align*}
	0&\leq\limsup_{M \to \infty} \limsup_{H \to \infty}\vert s_1(H,M) \vert 
	\\ & \leq \limsup_{M \to \infty} \limsup_{H \to \infty}  \sum\limits_{A \in [-H,H[^d_I \cap \bigcup_{M<p\in \mathcal{P}} U_p} \frac{\ell_{A,H}}{(2H)^d}  \\
	& \leq \limsup_{M \to \infty} \limsup_{H \to \infty}  \sum\limits_{A \in [-H,H[^d_I \cap \bigcup_{M <p\in \mathcal{P}} U_p}  \frac{c}{(2H)^d} \\
	& = c \limsup_{M \to \infty} \limsup_{H \to \infty} \   \displaystyle{ \frac{\vert  [-H,H[_I^d \cap \bigcup_{M<p\in \mathcal{P} } U_p \vert}{(2H)^d} } \\
	&=   c \limsup_{M \to \infty} \ \overline{\rho}\left( \bigcup\limits_{M<p\in \mathcal{P}} U_p\right) =0,
	\end{align*}
	where the last equality follows from  Condition \eqref{densitycond}. 
	Using \eqref{newcond2} and \eqref{newcond3} we get
	\begin{align*}
	0 \leq \limsup_{M \to \infty} \limsup_{H \to \infty} \vert s_2(H,M) \vert
	&= \limsup_{M \to \infty} \limsup_{H \to \infty} \sum_{p \in \mathcal{P}, \ M <p < H^\alpha} \frac{\vert U_p \cap [-H, H[_I^d \vert}{(2H)^d} \\
	&\leq \limsup_{M \to \infty} \limsup_{H \to \infty} \sum_{p \in \mathcal{P},\  M <p < H^\alpha}  v_p 
	= 0. 
	\end{align*}
	For $\alpha=0$ on the other hand, we have for $M>1$ that $s_1(H,M)=0=s_2(H,M)$.
	Using Corollary \ref{denofU} we get
	\begin{align*}
	\lim_{M \rightarrow \infty} \lim_{H \rightarrow \infty} s_3(H,M) 
	& = \lim_{M \to \infty} \lim_{H \to \infty}  \sum\limits_{\nu \in M_\mathbb{Q}, \ \nu \leq M \text{ or } \nu=\infty} \displaystyle{\frac{ \mid [-H,H[^d_I \cap U_\nu  \mid }{ (2H)^d  }      } \\
	& = \lim_{M \to \infty} \sum\limits_{\nu \in M_\mathbb{Q}, \  \nu \leq M\  \text{ or} \  \nu= \infty} \rho(U_\nu \cap \mathbb{Z}^d) \\
	&= \lim_{M \to \infty}\sum\limits_{\nu \in M_\mathbb{Q}, \  \nu \leq M \ \text{or} \  \nu= \infty} s_{\nu} \\
	&= \sum_{\nu \in M_{\mathbb{Q}}} s_{\nu}.
	\end{align*}

 \end{proof}
A natural question is whether some of the conditions in Theorem \ref{addendum} are redundant. This is not the case as the next two examples show.
\begin{example}\label{ex:nocond1}
In this example, we construct $(U_\nu)_{\nu \in M_\mathbb{Q}}$ such that Conditions \eqref{newcond} and \eqref{densitycond} are verified but  the conclusion of  Theorem \ref{addendum} does not hold.	As in Example \ref{st_ex}, we denote by $p_j$ the $j$th prime. We choose
	$$ U_\infty= \emptyset, \quad U_{p_j} = \{ p_j, p_{j+1}, \dots , p_{2^j}\}. $$
	The same argument as in Example \ref{st_ex} shows that $\mu_{p_j}(\partial (U_{p_j}))=0$. 
	By the prime number theorem we have that $\mathcal{P}$ has density zero and thus our choice satisfies Condition \eqref{densitycond} even without taking the limit in $M$. 

	Note that for $p_j >H>0$ we have $U_{p_j} \cap [-H, H[= \emptyset$ and thus Condition \eqref{newcond} is satisfied with $c=1=m$. Now we will show that the conclusion of the theorem does not hold. First we show that $\mu = \infty$, if the limit exists. Note that for this example $[-H,H[_I = [-H,H[$ for all $H>0$. Let $L\in \mathbb{N}$, then we compute 
	\begin{align*}
	&\sum_{A\in [-p_{2^L}, p_{2^L}[}  \frac{ \vert \{ \nu \in M_\mathbb{Q} \ \vert \ A \in U_\nu \} \vert}{2 p_{2^L}}
	= \sum_{j=1}^{2^L} \frac{\vert U_{p_j} \cap [-p_{2^L}, p_{2^L}[ \vert}{2 p_{2^L}} \\
	&= \sum_{j=1}^{L-1} \frac{\vert \{p_j, \dots, p_{2^j} \} \cap [-p_{2^L}, p_{2^L}[ \vert }{2p_{2^L}}
	+ \sum_{j=L}^{2^L-1} \frac{\vert \{p_j, \dots, p_{2^L-1}\} \vert}{2p_{2^L}} \\
&	= \sum_{j=1}^{L-1} \frac{2^j-j+1}{2p_{2^L}} + \sum_{j=L}^{2^L-1} \frac{2^L-j}{2p_{2^L}} \\
	&= \frac{1}{2 p_{2^L}} \left[ \left( 2^L-2 \right) - \frac{L(L-1)}{2}+ (L-1) + 2^L(2^L- L) - \frac{(2^L-1)2^L}{2} +\frac{L(L-1)}{2} \right] .
	\end{align*}
Hence, for $L$ sufficiently large we get
	\begin{equation} \label{lower bound}
	\sum_{A\in [-p_{2^L}, p_{2^L}[}  \frac{ \vert \{ \nu \in M_\mathbb{Q} \ \vert \ A \in U_\nu \} \vert}{2 p_{2^L}}
	\geq \frac{1}{5 p_{2^L}} 2^{2L}.
	\end{equation}
	Thus, using the prime number theorem, we obtain
	\begin{align*}
	\limsup_{H\rightarrow \infty} \sum_{A\in [-H, H[_I} \frac{ \vert \{ \nu \in M_\mathbb{Q} \ \vert \ A \in U_\nu \} \vert}{(2H)}
	&\geq \limsup_{L\rightarrow \infty} \sum_{A\in [-p_{2^L}, p_{2^L}[} \frac{ \vert \{ \nu \in M_\mathbb{Q} \ \vert \ A \in U_\nu \} \vert}{2 p_{2^L}} \\
	&\geq \limsup_{L\rightarrow \infty} \frac{1}{5 p_{2^L}} 2^{2L} \\
	&= \limsup_{L\rightarrow \infty} \frac{2^L}{5 \ln(2) L  }
	= \infty.
	\end{align*}
	As noted above, all finite sets are null sets for the $p$-adic Haar measure. Thus, we have $s_p = 0$ for all $p\in \mathcal{P}$ and $s_\infty = \frac{1}{2} \mu_\infty(\emptyset)=0$ and hence
	$$ \sum_{\nu \in M_{\mathbb{Q}}} s_\nu =0. $$
	This means the conclusion of the theorem does not hold, if we only assume Conditions \eqref{densitycond} and \eqref{newcond}. 
\end{example}

\begin{example} \label{ex:nocond2}
	Next we construct an example that satisfies  Conditions \eqref{densitycond}, \eqref{newcond2}, \eqref{newcond3}, 
	and we show that the conclusion of the theorem does not hold. We set $U_\infty=\emptyset$ and for $p\in \mathcal{P}\setminus \{ p_{2^n} \ \mid \ n\in \mathbb{N} \}$ we define $U_p = \emptyset$. Inductively, we define the remaining $U_{p_{2^n}}$. We start with $U_{p_1}=\{1\}$. If $U_{p_{2^n}}=\{m^2\}$  for $m\in \mathbb{N}$, then we define
	\begin{align*}
	U_{p_{2^{n+1}}} = \begin{cases}
	\{m^2 \},& \text{if } \vert \{ j\in \mathbb{N} \ \mid \ j\leq n, U_{p_{2^j}}=\{m^2 \} \} \vert<m^3, \\
	\{(m+1)^2\},& \text{else}.
	\end{cases}
	\end{align*}
	The same argument as for Example \ref{st_ex} applies here and 
	gives $\mu_{p_j}(\partial (U_{p_j})) =0$. We compute 
	\begin{align*}
	\overline{\rho}\left( \bigcup_{\nu\in M_\mathbb{Q}} U_\nu\right)
	= \overline{\rho}\left( \{ n^2 \ : \ n\in \mathbb{N} \} \right) 
	&= \limsup_{H\rightarrow \infty}  \frac{\vert \{ n\in \mathbb{N} \ : \ n^2 < H \} \vert}{2H} \\
	& \leq \limsup_{H\rightarrow \infty} \frac{\sqrt{H}}{2 H} 
	=0.
	\end{align*}
	Hence, Condition \eqref{densitycond} is satisfied. We have for $H>p_n$
	\begin{align*}
	\vert [-H, H[ \cap U_{p_n} \vert
	= \begin{cases}
	1 & \text{if } n= 2^k \text{ for some }  k\in \mathbb{N},\\
	0 & \text{otherwise}.
	\end{cases}
	\end{align*}
	Thus, we may pick
	\begin{align*}
	v_{p_n} = \begin{cases}
	\frac{1}{p_n} &  \text{if } n= 2^k \ \text{ for some } k\in \mathbb{N},\\
	0 & \text{otherwise}.
	\end{cases}
	\end{align*}
	By the prime number theorem, there exists some constant $D>0$, 
	such that
	\begin{align*}
	\sum_{p_n \in \mathcal{P}} v_{p_n} 
	= \sum_{k \geq 1} \frac{1}{p_{2^k}} 
	= \sum_{k \geq 1} \frac{2^k \ln(2^k)}{p_{2^k}} \frac{1}{2^k \ln(2^k)}
	\leq \sum_{k \geq 1} \frac{D}{2^k \ln(2^k)} < \infty.
	\end{align*}
	Thus, conditions \eqref{newcond2} and \eqref{newcond3} are satisfied. \\
	By construction, we have
	\begin{align*}
	 \vert \{ \nu \in M_\mathbb{Q} \ \vert \ A\in U_\nu \} \vert
	= \begin{cases}
	m^3  &A= m^2 \text{ for some } m\in \mathbb{N}_{>0},\\
	0  &\text{else}.
	\end{cases}
	\end{align*}
	Therefore, we get
	
	\begin{align*}
	\sum_{A\in [-H, H[_I} \frac{ \vert \{ \nu \in M_\mathbb{Q} \ \vert \ A\in U_\nu \} \vert}{(2H)}
	& \geq \sum_{m=1}^{\lfloor \sqrt{H} \rfloor - 1} \frac{m^3}{2H}
	= \frac{1}{2H} \frac{\lfloor \sqrt{H} \rfloor^2 }{4} (\lfloor  \sqrt{H} \rfloor -1)^2 \\ &
	\geq \frac{1}{32} (\lfloor \sqrt{H} \rfloor -1)^2.
	\end{align*}
	Hence,
	\begin{align*}
	\limsup_{H\rightarrow \infty} \sum_{A\in [-H, H[_I} \frac{ \vert \{ \nu \in M_\mathbb{Q} \ \vert \ A\in U_\nu \} \vert }{(2H)} = \infty.
	\end{align*}
	On the other hand, by the same argument as in the previous example we obtain
	\begin{align*}
	\sum_{\nu \in M_{\mathbb{Q}}} s_\nu =0.
	\end{align*}
\end{example}

\section{Applications}\label{sec4}

We can apply Theorem \ref{addendum} to sets, whose densities were computed via the local to global principle of Theorem \ref{poonen} and fulfill Conditions \eqref{newcond}, \eqref{newcond2} and \eqref{newcond3}.


For example we can compute the expected number of common prime divisors of all basic   minors of a  rectangular non-unimodular matrix. The rigorous statement reads as follows.

\begin{corollary}
Let $n<m$ be positive integers, 
and let us denote by $R$ the set of rectangular unimodular matrices in $\mathbb{Z}^{n \times m}$. Then the corresponding system $(U_\nu)_{\nu \in M_\mathbb{Q}}$ is given as in \cite{densitiesunimodular}, i.e.,
$U_\infty = \emptyset$ and for $p\in \mathcal{P}$ denote by $U_p$ the set of all matrices in $\mathbb{Z}_p^{n\times m}$ whose $n$-minors are all divisible by $p$.

 Then the expected value of the system $(U_\nu)_{\nu \in M_\mathbb{Q}}$ exists and is given by
\begin{align}\label{murec}
\mu &= \sum_{\nu \in M_\mathbb{Q}} s_\nu = \sum\limits_{p   \in \mathcal{P}} \left(1- \prod\limits_{i=0}^{n-1} \left(1-\frac{1}{p^{m-i}} \right)\right). 
\end{align}
And the average number of primes that divide all $n$-minors of a rectangular non-unimodular matrix is given by
\begin{equation}\label{restmurec}
\mu_{R^C} = \displaystyle{\frac{\mu}{1-\prod\limits_{i=0}^{n-1}\frac{1}{\zeta(m-i)} }}, 
\end{equation}
where $\zeta$ denotes the Riemann zeta function.
\end{corollary}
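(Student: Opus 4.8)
The plan is to apply Theorem \ref{addendum} directly to the system $(U_\nu)_{\nu \in M_\mathbb{Q}}$ and then to pass from $\mu$ to $\mu_{R^C}$ via Lemma \ref{passtoT}; here the ambient dimension is $d = nm$. The first step is to record the local measures. Since membership in $U_p$ depends only on the reduction $A \bmod p$, the set $U_p$ is a union of residue classes modulo $p$, and $s_p = \mu_p(U_p)$ equals the proportion of matrices in $\mathbb{F}_p^{n \times m}$ of rank strictly less than $n$. Counting full-rank matrices row by row gives $s_p = 1 - \prod_{i=0}^{n-1}(1 - p^{-(m-i)})$, while $s_\infty = \tfrac{1}{2^d}\mu_\infty(\emptyset) = 0$; this is exactly the local data used in \cite{densitiesunimodular}.

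Next I would verify the three hypotheses of Theorem \ref{addendum} with the choice $\alpha = 1$. Condition \eqref{densitycond} is already established in \cite{densitiesunimodular}, and alternatively follows from Lemma \ref{showdens} applied to two maximal minors built from distinct column sets (for instance the columns $\{1,\dots,n\}$ and $\{1,\dots,n-1,n+1\}$, which exist because $m>n$): these are distinct irreducible polynomials in the entries, hence coprime, and $A \in U_p$ forces both to vanish mod $p$. For \eqref{newcond}, every $A \in [-H,H[^d_I$ has rank $n$ over $\mathbb{Q}$, so it admits a nonzero $n$-minor $\Delta$ with $0 < |\Delta| \le n!\,H^n$; if $A \in U_p$ then $p \mid \Delta$, so any set of primes $p > H$ with $A \in U_p$ has product at most $|\Delta|$, forcing their number below $\tfrac{\log(n!)}{\log 2} + n$ for $H \ge 2$ (the finitely many matrices with $1 \le H < 2$ are absorbed by enlarging $c$). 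For \eqref{newcond2} and \eqref{newcond3} I would count lattice points: $U_p$ consists of $s_p p^{nm}$ residue classes mod $p$, and for $p < H$ each class meets $[-H,H[^{nm}$ in at most $(4H/p)^{nm}$ integer points, so $|U_p \cap [-H,H[^d_I| \le s_p (4H)^{nm} = 2^{nm} s_p (2H)^d$; thus $v_p = 2^{nm} s_p$ works, and $\sum_p v_p < \infty$ because $m>n$ forces $s_p = O(p^{-(m-n+1)}) = O(p^{-2})$.

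With all hypotheses in place, Theorem \ref{addendum} yields $\mu = \sum_{\nu \in M_\mathbb{Q}} s_\nu = \sum_{p \in \mathcal{P}}\bigl(1 - \prod_{i=0}^{n-1}(1 - p^{-(m-i)})\bigr)$, which is \eqref{murec}. For \eqref{restmurec} I would apply Lemma \ref{passtoT} with $T = R^C$: since $R = P^{-1}(\{\emptyset\})$ is precisely the set of unimodular matrices (those whose $n$-minors have no common prime divisor), the hypothesis $T^C \subseteq P^{-1}(\{\emptyset\})$ holds with equality, and $\rho(R^C) = 1 - \prod_{i=0}^{n-1}\zeta(m-i)^{-1}$ by the Euler-product factorization of the density computed in \cite{densitiesunimodular}. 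Lemma \ref{passtoT} then gives $\mu = \mu_{R^C}\,\rho(R^C)$, hence \eqref{restmurec}.

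I expect the main obstacle to be balancing \eqref{newcond} against \eqref{newcond2}--\eqref{newcond3} through the single exponent $\alpha$: the large-prime bound \eqref{newcond} degrades as $\alpha \to 0$ (at $\alpha = 0$ a bounded minor can carry $\sim \log H$ prime factors), while the lattice-point bound behind \eqref{newcond2} blows up once $\alpha > 1$, so the argument is only viable in the window $\alpha \in (0,1]$, and $\alpha = 1$ threads this needle. The other genuinely load-bearing point is the convergence of $\sum_p v_p$, which is exactly where the strict inequality $n < m$ enters: for square matrices the leading term of $s_p$ is of order $p^{-1}$ and the series diverges.
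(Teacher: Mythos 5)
Your proposal is correct and follows essentially the same route as the paper: apply Theorem \ref{addendum} with $\alpha=1$, verify \eqref{newcond} by noting that a nonzero maximal minor of $A\in[-H,H[^d_I$ is polynomially bounded in $H$ so only boundedly many primes $p>H$ can divide it, verify \eqref{newcond2}--\eqref{newcond3} by counting residue classes of $U_p$ modulo $p$ and their lifts to $[-H,H[^{nm}$ (your $v_p = 2^{nm}s_p = O(p^{-2})$ is just a slightly slicker packaging of the paper's explicit non-full-rank count yielding $v_p = 6^{nm}/p^2$), and then pass to $\mu_{R^C}$ via Lemma \ref{passtoT} with $T=R^C$, $T^C = P^{-1}(\{\emptyset\})$. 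The only cosmetic differences are your use of a single nonzero minor instead of the gcd of all minors, and your optional re-derivation of \eqref{densitycond} from Lemma \ref{showdens}, neither of which changes the substance.
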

\begin{proof}
Recall from \cite{densitiesunimodular}, that all conditions of Theorem \ref{poonen} are satisfied for the corresponding system $(U_\nu)_{\nu \in M_\mathbb{Q}}$, that for $p \in \mathcal{P}$ we have that $$s_p = \left(1- \prod\limits_{i=0}^{n-1}\left(1-\frac{1}{p^{m-i}}\right)\right)$$ and thus $$\rho(R)=\rho(P^{-1}(\{\emptyset\}))= \prod\limits_{i=0}^{n-1}\frac{1}{\zeta(m-i)} .$$

Thanks to Lemma \ref{passtoT} we are left with proving that the additional assumptions on the system $(U_\nu)_{\nu \in M_\mathbb{Q}}$ of Theorem \ref{addendum} are satisfied. 

For this, let us choose $\alpha=1$ and denote by $f_i$ the function associating to $A\in \mathbb{Z}^{n\times m}$ some fixed $n$-minor. Then we have for all $A\in [-H, H[^{n\times m}$ the inequality $ f_i(A)  \leq (2H)^n$ for all $i$. We exclude that $f_i(A)$ vanishes for all $i$, since then we land in $I$. 

Recall, that
$$\ell_{A,H} = \left\vert \left\{ p \in \mathcal{P} \mid p>H, \ A \in U_p \cap [-H,H[^{nm}_I \right\} \right\vert.$$
Thus, we have that 
$$H^{\ell_{A,H}} \leq \prod\limits_{\substack{p \in \mathcal{P} \\ p>H, \ A \in U_p \cap [-H,H[^{nm}_I}} p.$$
Further, observe that 
		$$\prod\limits_{\substack{p \in \mathcal{P} \\ p>H, \ A \in U_p \cap [-H,H[^{nm}_I}} p = \gcd( (f_i(A))_i) \leq (2H)^n.$$
		Hence, we get that $H^{\ell_{A,H}} \leq (2H)^n$, and thus Condition \eqref{newcond} is satisfied.

		To verify Condition \eqref{newcond2} we want to show that there exists a sequence $(v_p)_{p  \in \mathcal{P}}$ such that for all $p<H$ 
		we have that $ \mid U_p \cap [-H,H[^{nm}_I \mid \leq  v_p (2H)^{nm}$. The set of non-full rank matrices over $\mathbb{F}_p$ has size 
		
		$$p^{nm}-\prod\limits_{i=0}^{n-1} (p^m-p^i) \leq 2^n p^{m(n-1)+n-1} = 2^n p^{(m+1)(n-1)}.$$ 
		We can fix one non-full rank $n \times m$ matrix over $\mathbb{F}_p$, for which we have less than or equal to $2^np^{(m+1)(n-1)}$ choices. 
		For 
		this fixed matrix there are less than or equal to $(\lceil \frac{2H}{p} \rceil)^{nm}$ lifts to $\mathbb{Z}^{n\times m} \cap [-H,H[^{nm}$. Hence, we have for $p<H$
		\begin{align*} 
		\mid U_p \cap [-H,H[^{nm}_I \mid & \leq 2^n \left(\left\lceil \frac{2H}{p}\right\rceil\right)^{nm} p^{(m+1)(n-1)}  \\
		& \leq  2^n\left( \frac{2H}{p} +1\right)^{nm} p^{(m+1)(n-1)} \\
		& \leq  2^n(3H)^{nm}p^{nm -m+n-1-nm} \\
		& \leq  6^{nm}H^{nm} \frac{1}{p^2}.
		\end{align*}
		Thus $(v_p)_{p  \in \mathcal{P}}$ can be chosen  to be $(\frac{6^{nm}}{p^2})_{p\in \mathcal{P}}$, which also satisfies Condition \eqref{newcond3}. Hence, \eqref{murec} follows and Lemma \ref{passtoT} implies \eqref{restmurec}.
\end{proof}

By choosing $n=1$ we get the mean  of numbers of primes dividing non-coprime $m$-tuples of integers and of course choosing $n=1$ and $m=2$ will give the mean  
of numbers of primes dividing non-coprime pairs of integers.
 
The results of  \cite{primenumbereisenstein} regarding the average amount of primes for which a  non-monic  Eisenstein-polynomial {satisifies the criterion of Eisenstein} follow immediately as corollary using Theorem \ref{addendum}.

\begin{corollary}\label{cor:eisenstein}
Let $d\geq 2$ be an integer. The expected number of primes $p$ for which an Eisenstein polynomial of degree $d$ is $p$-Eisenstein, is given by
\[\left(1-\prod_{p\in \mathcal P}\left(1-\frac{(p-1)^2}{p^{d+2}}\right)\right)^{-1}\sum_{p\in \mathcal P}\frac{(p-1)^2}{p^{d+2}}.\]
\end{corollary}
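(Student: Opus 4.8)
The plan is to mirror the proof of the rectangular-matrix corollary: realize the set of Eisenstein polynomials as a system $(U_\nu)_{\nu \in M_\mathbb{Q}}$ in the sense of Theorem \ref{poonen}, compute the local densities $s_p$, check the three hypotheses of Theorem \ref{addendum}, and finally convert the unconditional mean $\mu$ into the conditional one $\mu_T$ via Lemma \ref{passtoT}.

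First I would identify a degree-$d$ polynomial with its coefficient vector $(a_0, \dots, a_d) \in \mathbb{Z}^{d+1}$, set $U_\infty = \emptyset$, and for each prime $p$ let $U_p \subset \mathbb{Z}_p^{d+1}$ be the set of vectors with $p \nmid a_d$, with $p \mid a_i$ for $0 \le i \le d-1$, and with $p^2 \nmid a_0$ (the $p$-Eisenstein condition). Each defining condition is clopen in $\mathbb{Z}_p^{d+1}$, so $U_p$ is clopen and $\mu_p(\partial(U_p)) = 0$; multiplying the three independent probabilities gives
\[
s_p = \mu_p(U_p) = \frac{p-1}{p}\cdot\frac{1}{p^{d-1}}\cdot\frac{p-1}{p^2} = \frac{(p-1)^2}{p^{d+2}},
\]
while $s_\infty = 0$. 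Hence $\sum_{\nu \in M_\mathbb{Q}} s_\nu = \sum_{p} (p-1)^2/p^{d+2}$, which converges precisely because $d \ge 2$.

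Next I would verify the hypotheses of Theorem \ref{addendum} with $\alpha = 1$. Any $A \in U_p$ has $p \mid a_0$ and, since $p^2 \nmid a_0$, also $a_0 \neq 0$; for $A \in [-H,H[^{d+1}_I$ this forces $p \le |a_0| \le H$, so $U_p \cap [-H,H[^{d+1}_I = \emptyset$ whenever $p > H = H^\alpha$. Thus $\ell_{A,H} = 0$ and Condition \eqref{newcond} holds with $c = 1$; Condition \eqref{densitycond} follows from Lemma \ref{showdens} applied to the coordinate functions $a_0, a_1$ (coprime in $\mathbb{Z}[a_0,\dots,a_d]$ and both vanishing mod $p$ on $U_p$ since $d \ge 2$). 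For the small-prime regime I would count residues: the $p$-Eisenstein condition cuts out exactly $p-1$ classes modulo $p$ inside $(\mathbb{Z}/p)^{d+1}$, each contributing at most $\lceil 2H/p\rceil^{d+1}$ lattice points of $[-H,H[^{d+1}$, so for $p < H$
\[
|U_p \cap [-H,H[^{d+1}_I| \le (p-1)\Big(\frac{2H}{p}+1\Big)^{d+1} \le \frac{C_d}{p^{d}}\,(2H)^{d+1}
\]
for a constant $C_d$ depending only on $d$. Taking $v_p = C_d/p^{d}$ gives Condition \eqref{newcond2}, and $\sum_p v_p$ converges since $d \ge 2$, giving Condition \eqref{newcond3}. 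Theorem \ref{addendum} then yields $\mu = \sum_p (p-1)^2/p^{d+2}$.

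Finally I would pass to the conditional expectation. Let $T$ be the set of polynomials that are $p$-Eisenstein for at least one prime, so that $T^C = P^{-1}(\{\emptyset\})$. By iii) of Theorem \ref{poonen} (equivalently Corollary \ref{emptyset}),
\[
\rho(T^C) = \prod_{\nu \in M_\mathbb{Q}}(1 - s_\nu) = \prod_{p \in \mathcal{P}}\Big(1 - \frac{(p-1)^2}{p^{d+2}}\Big),
\]
so $\rho(T) = 1 - \prod_p (1 - (p-1)^2/p^{d+2})$ is nonzero. Lemma \ref{passtoT} gives $\mu = \mu_T\,\rho(T)$, i.e. $\mu_T = \mu/\rho(T)$, which is exactly the stated expression. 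The only step requiring genuine care is the lattice-point estimate behind \eqref{newcond2}--\eqref{newcond3}; the pleasant point is that the $1/p^{d}$ saving it produces is summable exactly under the hypothesis $d \ge 2$, so no extra assumption is needed, and everything else is bookkeeping once $s_p$ has been computed.
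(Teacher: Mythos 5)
Your proposal is correct and follows essentially the same route as the paper: the same system $U_p=(p\vZ_p\setminus p^2\vZ_p)\times p\vZ_p^{d-1}\times(\vZ_p\setminus p\vZ_p)$ with $\alpha=1$, the observation that no polynomial is $p$-Eisenstein for $p$ exceeding its height (Condition \eqref{newcond}), a lattice-point count giving $v_p\sim C_d/p^d$ (Conditions \eqref{newcond2}--\eqref{newcond3}), and Lemma \ref{passtoT} for the conditional expectation. In fact your write-up is more complete than the paper's, which leaves the computation of $s_p$, the verification of Condition \eqref{densitycond} (which you handle correctly via Lemma \ref{showdens} applied to the coprime coordinates $a_0,a_1$), and the final application of Lemma \ref{passtoT} implicit.
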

\begin{proof}
In Theorem  \ref{addendum} simply use the system $U_p=(p\vZ_p\setminus p^2\vZ_p)\times p\vZ_p^{d-1} \times (\vZ_p\setminus p\vZ_p)$ and choose $\alpha=1$.
Let $E(H)$ be the set of Eisenstein polynomials of height at most $H$.
Condition \eqref{newcond} is trivially verified, as no polynomial can be Eisenstein with respect to a prime larger than its height.
Condition \eqref{newcond3} is verified thanks to the rough estimate $|E(H)|\leq \lceil 2H/p\rceil^{d}\cdot H$, and this is enough for our purposes because $\sum_{p\in \mathcal{P}} 1/ p^{d}$ converges for $d\geq 2$.

\end{proof}

Moreover, using the system $\overline E_p$'s given in \cite{micheli2016densityeis}  we can obtain the expected number of primes for which a given polynomial $f(x)$ is Eisenstein for some shift $f(x+i)$.
For the sake of completeness and to show how easy it is to apply Theorem \ref{addendum} when the system of $U_p$'s is given, let us now compute this expected value.
Let $d\geq 3$ be a positive integer and let $\overline E_p$ be the set of polynomials $f$ of degree $d$ in $\vZ_p[x]$ such that there exists $i$ 
for which $f(x+i)$ is $p-$Eisenstein in $\vZ_p[x]$.
Cleary, the expected value of the system  $(U_p)_{p \in \mathcal{P}}=(\overline E_p)_{p \in \mathcal{P}}$ (where every $\overline E_p$ is seen as a subset of $\vZ_p^{d+1}$)  is exactly the number we are interested in.
Let us verify that the system satisfies the three conditions
\begin{itemize}
\item Condition \eqref{densitycond} has already been verified in \cite{micheli2016densityeis}, as it is needed to compute the density of the set of shifted Eisenstein polynomials.
\item Condition \eqref{newcond} is immediate with the choice $\alpha=1$. In fact, the 
primes $p$, for which a polynomial is $p$-Eisenstein, divide the discriminant of the
 polynomial. 
Notice that $f(x+i)$ has the same discriminant as 
$f(x)$, which is bounded by $CH^{2d-2}$ (as the discriminant is homogeneous of degree $2d-2$), for some absolute constant $C$. Thus, one obtains that the product of the primes $p$ larger than $H$ for which $f(x+i)$ is $p$-Eisenstein for some $i$ is bounded by an absolute constant.
\item Condition \eqref{newcond3} is also easy to verify. First, observe that if $f(x+i)$ is $p$-Eisenstein for some $i\in\vN$, then $i$ can be chosen less than $p$ (see for example \cite[Lemma 6]{micheli2016densityeis}).
So that the set of shifted $p$-Eisenstein polynomials $S$ in $[-H,H[^{d+1}$ is absolutely bounded by $|E(H)|\cdot p$, where $E(H)$ is the set of $p$-Eisenstein polynomials of degree $d+1$ and height at most $H$.
Finally, one can very roughly give the estimate $|E(H)|\leq \lceil 2H/p\rceil^{d}\cdot H$, which is anyway enough for our purposes as $\sum_{p\in \mathcal{P}} 1/ p^{d-1}$ converges for $d\geq 3$.
\end{itemize}

Using the computation of the $p$-adic measure of $\overline E_p$ from \cite{micheli2016densityeis} and Lemma \ref{passtoT} (to obtain the restricted mean), one obtains the following result:

\begin{corollary}
For $d\geq 3$, the mean of the system $(\overline E_p)_{p \in \mathcal{P}}$ (i.e., the expected number of primes for which a polynomial $f(x)$ is Eisenstein after some shift) is 
\[\sum\limits_{{p\in \mathcal{P}} }\frac{(p-1)^2}{p^{d+1}}.\]
The expected value restricted to the set of shifted Eisenstein polynomials is 
\[\left(1-\prod\limits_{{p\in \mathcal{P}} }\left(1-\frac{(p-1)^2}{p^{d+1}}\right)\right)^{-1}\sum\limits_{{p\in \mathcal{P}} }\frac{(p-1)^2}{p^{d+1}}.\]
\end{corollary}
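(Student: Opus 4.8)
The plan is to feed the system $(U_p)_{p\in\mathcal{P}}=(\overline E_p)_{p\in\mathcal{P}}$ (with $U_\infty=\emptyset$) directly into Theorem \ref{addendum} with the choice $\alpha=1$, and then simply read off both means from the resulting sum of local measures. First I would record that all hypotheses of Theorem \ref{addendum} are in place for this system, as established in the discussion preceding the statement: Condition \eqref{densitycond} is inherited from the density computation of the shifted Eisenstein set in \cite{micheli2016densityeis}; Condition \eqref{newcond} holds with $\alpha=1$ because every prime $p$ for which $f(x+i)$ is $p$-Eisenstein divides $\disc(f)$, and since a shift does not change the discriminant, the product of the primes $p>H$ at which $f$ is shifted-Eisenstein divides $\disc(f)$, which is bounded by $CH^{2d-2}$; and Conditions \eqref{newcond2}, \eqref{newcond3} follow from the rough count $|E(H)|\le \lceil 2H/p\rceil^{d}\cdot H$ together with the fact that a suitable shift may be taken in $\{0,\dots,p-1\}$, which makes $v_p$ comparable to $1/p^{d-1}$, so that $\sum_{p\in\mathcal{P}}1/p^{d-1}$ converges precisely because $d\geq 3$.

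With the three conditions verified, Theorem \ref{addendum} yields at once that the mean of the system exists and equals $\sum_{\nu\in M_{\mathbb{Q}}}s_\nu$. It then remains only to insert the local data. Since $U_\infty=\emptyset$ one has $s_\infty=0$, while for each finite place the measure $s_p=\mu_p(\overline E_p)=\tfrac{(p-1)^2}{p^{d+1}}$ is exactly the quantity computed in \cite{micheli2016densityeis}. Substituting these values gives the first claimed identity
\[
\mu=\sum_{\nu\in M_{\mathbb{Q}}}s_\nu=\sum_{p\in\mathcal{P}}\frac{(p-1)^2}{p^{d+1}}.
\]

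For the restricted mean I would take $T$ to be the set of shifted Eisenstein polynomials, so that $T^C=P^{-1}(\{\emptyset\})$. By Theorem \ref{poonen} iii) (equivalently Corollary \ref{emptyset}) the density $\rho(T^C)=\prod_{p\in\mathcal{P}}(1-s_p)$ exists, whence
\[
\rho(T)=1-\prod_{p\in\mathcal{P}}\Bigl(1-\frac{(p-1)^2}{p^{d+1}}\Bigr),
\]
which lies in $(0,1)$: the product is positive because $\sum_p s_p$ converges by Theorem \ref{addendum} i), and it is strictly less than $1$ because some $s_p>0$. In particular $\rho(T)\neq 0$, so Lemma \ref{passtoT} applies and gives $\mu_T=\mu/\rho(T)$, which is the second claimed formula. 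The only genuine inputs beyond bookkeeping are the local measure $s_p=\tfrac{(p-1)^2}{p^{d+1}}$ and the verification of \eqref{newcond}--\eqref{newcond3}, both imported from \cite{micheli2016densityeis} and the discriminant/counting estimates above; once these are granted the corollary is a direct application of Theorem \ref{addendum} and Lemma \ref{passtoT}, so I do not anticipate any serious obstacle, the main point being simply to confirm that $\rho(T)\neq 0$ so that the passage to the restricted mean is legitimate.
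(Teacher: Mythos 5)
Your proposal is correct and follows essentially the same route as the paper: verifying Conditions \eqref{densitycond}, \eqref{newcond} (via the shift-invariant discriminant bound with $\alpha=1$) and \eqref{newcond2}--\eqref{newcond3} (via the count $|E(H)|\cdot p$ with the shift taken in $\{0,\dots,p-1\}$, convergent since $d\geq 3$), then importing $s_p=\frac{(p-1)^2}{p^{d+1}}$ from \cite{micheli2016densityeis} and applying Theorem \ref{addendum} together with Lemma \ref{passtoT}. Your explicit check that $\rho(T)\in(0,1)$, so that the passage to the restricted mean is legitimate, is a small point the paper leaves implicit, but it does not constitute a different argument.
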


\section*{Acknowledgments}
The second author is thankful to the Swiss National Science Foundation grant number  20020\_172623.
The third author is  partially supported by Swiss National Science Foundation grant number 188430.

\bibliographystyle{plain}
\bibliography{biblio}

\end{document}